\documentclass[12pt,a4paper]{amsart}

\usepackage[margin=2cm]{geometry}

\newcommand{\C}{\ensuremath{\mathbb{C}}}

\newcommand{\R}{\ensuremath{\mathbb{R}}}

\newcommand{\abs}[1]{\ensuremath{\lvert#1\rvert}}
\newcommand{\norm}[1]{\ensuremath{\lVert#1\rVert}}

\newcommand{\eps}{\epsilon}

\newtheorem{theorem}{Theorem}[section]
\newtheorem{lemma}[theorem]{Lemma}
\newtheorem{proposition}[theorem]{Proposition}

\theoremstyle{definition}

\DeclareMathOperator{\Tr}{Tr}

\title{Quantum unique ergodicity for random bases of spectral projections}
\author{Kenneth Maples}
\address{Institut f\"ur Mathematik, Universit\"at Z\"urich, Winterthurerstrasse 190, CH-8057 Z\"urich, Switzerland.}
\email{kenneth.maples@math.uzh.ch}

\begin{document}

\begin{abstract}
We consider a random wave model introduced by Zelditch to study the behavior of typical quasi-modes on a Riemannian manifold. Using the exponential moment method, we show that random waves satisfy the quantum unique ergodicity property with probability one under mild growth assumptions.
\end{abstract}

\maketitle

\section{Introduction}

\thispagestyle{empty}

The theory of quantum chaos is concerned with the high energy limit of quantizations of classical dynamical systems. For example, let $(M,g)$ be a compact Riemannian manifold and let $\Delta$ denote the positive Laplace-Beltrami operator. It is natural to ask how the eigenfunctions $\Delta f_k = \lambda_k f_k$ behave as the eigenvalues $\lambda_k$ grow to infinity. It is known \cite{Sch74, CdV85, Zel87} that if the classical geodesic flow $G_t$ on $S^*M$ is \emph{ergodic}, i.e.~if for every continuous function $f \in C^0(S^*M)$ we have
\[
 \lim_{T \to \infty} \frac{1}{T} \int_0^T f(G_t(x)) \, dt = \int_{S^*M} f \, d\omega
\]
then the eigenfunctions $f_k$ are \emph{quantum ergodic} in the following precise sense: For every observable (zeroeth order psuedo-differential operator) $A \in \Psi^0(M)$ there is a density $1$ subsequence $f_{k_i}$ of the eigenfunctions such that
\[
  \langle A f_{k_i}, f_{k_i} \rangle \xrightarrow{i \to \infty} \int_{S^*M} \sigma_A \, d\omega
\]
where $\sigma_A$ is the principal symbol of $A$. Thus, for classically ergodic systems the densities $\abs{f_k}^2 \, dV$ of most of the eigenfunctions will converge to the uniform measure in the high energy limit.

In the physics literature, it is expected that the high energy behavior of such quantizations of classical ergodic systems should match what is predicted by random matrix theory \cite{Wil88}. Towards this end, several models for a random sequence of functions have been proposed to simulate various limiting behaviors. In a series of articles, Zelditch \cite{Zel92, Zel96, Zel12} introduced a random matrix model to analyze the limiting behavior of a random sequence of functions which are short linear combinations of eigenfunctions, each with eigenvalue $\lambda_j$ growing to infinity. Let us now recall this model.

Fix a compact Riemannian manifold $(M,g)$ and let $P \in \Psi^1(M)$ be a first order pseudo-differential operator (see e.g.~\cite{Hor85a} for definitions). For example, as above we may choose $P = \sqrt{\Delta}$ to be the square root of the positive Laplacian. Then because $P$ is a positive Hermitian (unbounded) operator on $L^2(M)$, it has positive eigenvalues $\{\lambda_j\}_{j=1}^\infty$ where we have fixed the ordering $\lambda_j \leq \lambda_{j+1}$ for each $j \geq 1$. We can make a (non-canonical) choice of eigenfunctions $f_j \in L^2(M)$ so that $P f_j = \lambda_j f_j$ and $\langle f_j, f_k \rangle = \delta_{j,k}$.

Because $P$ is self-adjoint we can partition its spectrum into disjoint intervals $I_k \subset \R$ and construct spectral projections $\Pi_k : L^2(M) \to \mathcal{H}_k$, where $\mathcal{H}_k$ is the finite-dimensional span of the eigenfunctions with eigenvalues in $I_k$. These projections are self-adjoint.

For each spectral projection $\Pi_k$, let us write $f_1^{(k)}, \dots, f_d^{(k)}$ for the sequence of eigenfunctions with eigenvalues in $I_k$; here $d = \dim \mathcal{H}_k$. Then we can define a random orthonormal basis of $\mathcal{H}_k$ by constructing $U_k \in U(\mathcal{H}_k)$ randomly according to Haar measure and defining $g_j^{(k)} := U_k f_j^{(k)}$. Note that the joint law of the random basis $(g_j^{(k)})_{j=1}^d$ is independent of the choice of the eigenfunctions $f_j$ for repeated eigenvalues because Haar measure is invariant.

This construction can be extended to all of $L^2(M)$ in the following natural way. Let $U \in U(L^2(M))$ be the operator which acts diagonally on the block decomposition
\begin{align*}
  U : L^2(M) \cong \bigoplus_{k=1}^\infty \mathcal{H}_k &\to \bigoplus_{k=1}^\infty \mathcal{H}_k \\
  \sum_{k=1}^\infty h_k &\mapsto \sum_{k=1}^\infty U_k h_k
\end{align*}
and the sequence $(U_k)_{k=1}^\infty \in \prod_{k=1}^\infty U(\mathcal{H}_k)$ is constructed according to the product measure which projects to Haar measure on each block. In probabilistic language, we simply choose $(U_k)$ as a sequence of independent Haar unitary matrices of the appropriate dimension. We therefore have the random sequence $g_j = U f_j$ which forms a basis for $L^2(M)$.

For $\emph{any}$ sequence $\phi_j$ of functions in $L^2(M)$, we say that $(\phi_j)_{j=1}^\infty$ is \textbf{ergodic} if for all $A \in \Psi^0(M)$ with principal symbol $\sigma_A$,
\[
  \lim_{N \to \infty} \frac{1}{N} \sum_{j=1}^N \abs{\langle A \phi_j, \phi_j \rangle - \int_{S^*M} \sigma_A \, d\omega}^2 = 0.
\]
In particular, by a classical argument there is a subsequence $j_i$ with
\[
  \liminf_{N \to \infty} \frac{\#\{j_i \leq N\}}{N} = 1
\]
such that $\langle A \phi_{j_i}, \phi_{j_i} \rangle \xrightarrow{i \to \infty} \int_{S^*M} \sigma_A \, d\omega$.

Now we can recall the previous results of Zelditch on this random wave model.
\begin{theorem}[Zelditch \cite{Zel92, Zel96, Zel12}] \label{thm:zelditch}
    Suppose the spectral projections $\Pi_k$ satisfy $\dim \mathcal{H}_k \to \infty$ and the Weyl asymptotics $\frac{1}{\dim \mathcal{H}_k} \Tr \Pi_k A \Pi_k \to \int_{S^*M} \sigma_A \, d\omega$ for all $A \in \Psi^0(M)$. Then with probability one the random sequence $(g_j)_{j=1}^\infty = (U f_j)_{j=1}^\infty$ is ergodic.
\end{theorem}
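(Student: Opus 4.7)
The plan is to reduce ergodicity (a statement about all $A$ simultaneously) to a concentration estimate for each fixed $A \in \Psi^0(M)$, and then pass from a countable dense family of observables to all of $\Psi^0(M)$.

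Fix $A \in \Psi^0(M)$ and write $\alpha := \int_{S^*M}\sigma_A\,d\omega$. The first move is the decomposition
\[
  \langle A g_j, g_j\rangle - \alpha = Y_j + z_{k(j)},
\]
where $k(j)$ denotes the block containing the index $j$, the term $z_k := d_k^{-1}\Tr(\Pi_k A \Pi_k) - \alpha$ is deterministic and tends to $0$ by the Weyl hypothesis, and $Y_j := \langle A g_j, g_j\rangle - d_k^{-1}\Tr(\Pi_k A \Pi_k)$ has Haar mean zero on each block. The $z_{k(j)}$ contribution to the ergodic Ces\`aro sum vanishes by Ces\`aro averaging, so the problem reduces to showing that almost surely
\[
  \frac{1}{N}\sum_{j=1}^N \abs{Y_j}^2 \longrightarrow 0 \text{ as } N\to\infty.
\]

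The heart of the argument is concentration of measure on each factor $U(\mathcal{H}_k)$. Because $U_k \mapsto \langle A U_k f, U_k f\rangle$ is $O(\norm{A})$-Lipschitz in the Hilbert--Schmidt metric, the log-Sobolev inequality on the unitary group yields sub-Gaussian concentration for each $Y_j$ on scale $\norm{A}/\sqrt{d_k}$. Applied to the block sum $T_k := \sum_{j \in \text{block } k}\abs{Y_j}^2$, itself a Lipschitz function of $U_k$, this produces an exponential tail bound around $\mathbb{E} T_k$. A short Weingarten computation gives $\mathbb{E}\abs{Y_j}^2 = O(\norm{A}^2/d_k)$, hence $\mathbb{E} T_k = O(\norm{A}^2)$, and the aggregate mean is $N^{-1}\sum_k \mathbb{E} T_k = O(K(N)/N)$, where $K(N)$ counts the blocks meeting $\{1,\dots,N\}$; this is $o(1)$ precisely because $d_k \to \infty$. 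The $U_k$'s being independent across blocks, the block-wise bounds combine into an exponential deviation estimate for $N^{-1}\sum_{j\leq N}\abs{Y_j}^2$, strong enough that Borel--Cantelli along the subsequence $N_\ell = 2^\ell$, followed by monotone interpolation, yields the desired almost-sure convergence for the fixed $A$.

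To handle all observables at once, I would fix a countable family $\mathcal{D} \subset \Psi^0(M)$ that is dense in operator norm (using that principal symbols lie in the separable space $C(S^*M)$ and that the operator norm is controlled modulo lower-order perturbations). On the countable intersection of the almost-sure events, ergodicity holds for every $B \in \mathcal{D}$, and the bound $\abs{\langle (A-B)g_j, g_j\rangle} \leq \norm{A-B}_{\mathrm{op}}$ combined with the elementary inequality $(u+v+w)^2 \leq 3(u^2+v^2+w^2)$ then extends ergodicity to all $A \in \Psi^0(M)$. The main obstacle is the block-level concentration: within a single block the $Y_j$'s all depend on the same Haar element $U_k$, so no scalar concentration inequality can be applied term-by-term. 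One must instead treat $T_k$ as a single Lipschitz function on $U(d_k)$, compute its Haar mean accurately via Weingarten calculus, and verify that the concentration radius is fine enough, relative to $d_k$, for the Borel--Cantelli sum along a geometric subsequence to converge.
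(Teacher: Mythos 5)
Your argument is correct in outline, but it is worth being clear that the paper does not prove Theorem~\ref{thm:zelditch} at all --- it is quoted from Zelditch, whose own proof (as described in Section~\ref{sec:quantum}) proceeds by computing moments of the matrix coefficients, i.e.\ moments of a polytope spanned by permutations of the eigenvalues of $\Pi_k A \Pi_k$; the paper's own machinery (Propositions~\ref{prop:combinatorial}--\ref{prop:expmomentmethod}) is built to prove the stronger Theorem~\ref{thm:main} by writing a Haar column as normalized exponential variables and applying the exponential moment method. Your route is genuinely different: you replace both moment computations with Lipschitz concentration (log-Sobolev on the unitary group) applied to the block sums $T_k$, plus a single Weingarten second-moment identity $\mathbb{E}\abs{Y_j}^2 = O(\norm{A}^2/d_k)$, and you correctly identify that within a block the $Y_j$ are dependent so that $T_k$ must be treated as one function of $U_k$. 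The decomposition $\langle A g_j,g_j\rangle-\alpha = Y_j + z_{k(j)}$ with the Weyl hypothesis absorbing the deterministic part, the $O(K(N)/N)$ bound from $d_k\to\infty$, Borel--Cantelli along $N_\ell=2^\ell$ with monotone interpolation, and the standard countable-dense-family argument (using that lower-order remainders contribute $o(1)$ on an orthonormal sequence) are all sound. Two points deserve explicit verification rather than assertion: the Lipschitz constant of $T_k$ in the Hilbert--Schmidt metric is $O(\sqrt{d_k}\,\norm{A}^2)$ only after a Cauchy--Schwarz step (the naive term-by-term bound gives $O(d_k)$, which still suffices here since the resulting tail $\exp(-c\eps^2 N)$ remains summable, but the scale should be stated); and the log-Sobolev inequality with a dimension-independent constant holds on $SU(d)$ rather than $U(d)$, so one should note that $U\mapsto\langle AUf,Uf\rangle$ is invariant under the center. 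What your approach buys is robustness --- with the growth hypothesis $\dim\mathcal{H}_k > Ck^\eps$ the same block-wise concentration at scale $d_k^{-1/2}$ plus a union bound would also yield Theorem~\ref{thm:main} --- while the paper's exponential-variable representation buys explicitness, avoiding any appeal to concentration of measure on Lie groups.
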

Note in particular that this theorem does not require that the underlying manifold $(M,g)$ have ergodic geodesic flow. 

It is of interest to know when the dense subsequence condition above can be improved. In applications, this eliminates the possibility of high energy quantum states with ``scarring'' effects. We say that $(\phi_j)_{j=1}^\infty$ is \textbf{uniquely ergodic} if for all $A \in \Psi^0(M)$ with principal symbol $\sigma_A$,
\[
  \lim_{j \to \infty} \langle A \phi_j, \phi_j \rangle = \int_{S^*M} \sigma_A \, d\omega.
\]
If the sequence $(\phi_j)_{j=1}^\infty$ is uniquely ergodic, then it is easy to see that satisfies the ergodic property as well.

The purpose of this article is to prove that the random wave model satisfies this stronger condition as well.
\begin{theorem} \label{thm:main}
Under the same conditions as Theorem~\ref{thm:zelditch}, if additionally $\dim \mathcal{H}_k > C k^\eps$ for some $\eps > 0$ then with probability one the random sequence $(g_j)_{j=1}^\infty$ is uniquely ergodic.
\end{theorem}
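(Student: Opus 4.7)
The plan is to reduce the theorem to a concentration estimate for the random diagonal entries $\langle A g_j, g_j\rangle$ and then invoke Borel--Cantelli, with the growth hypothesis $d_k := \dim\mathcal{H}_k > C k^\eps$ guaranteeing summability. Fix $A \in \Psi^0(M)$. If $j$ lies in the $k$-th block then $g_j = U_k f_j^{(k)} \in \mathcal{H}_k$, so $\langle A g_j, g_j\rangle = \langle \Pi_k A \Pi_k g_j, g_j\rangle$ and $g_j$ is marginally Haar-uniform on the unit sphere of $\mathcal{H}_k$. Its expectation is therefore exactly $d_k^{-1}\Tr\Pi_k A\Pi_k$, which by the Weyl hypothesis converges as $k\to\infty$ to $\int_{S^*M}\sigma_A\,d\omega$. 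It therefore suffices to prove that $\langle A g_j, g_j\rangle - d_{k(j)}^{-1}\Tr\Pi_{k(j)} A\Pi_{k(j)} \to 0$ almost surely.

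The main analytic input is a subgaussian concentration inequality for diagonal entries: for any Hermitian $B$ on $\C^d$ and a Haar-uniform unit vector $v$,
\[
 \Pr\bigl(\bigl|\langle Bv,v\rangle - d^{-1}\Tr B\bigr| > t\bigr) \leq 2\exp\bigl(-c\,t^2 d/\norm{B}^2\bigr),
\]
which I would prove by the exponential moment method, bounding $\mathbb{E}\,e^{\lambda\langle Bv,v\rangle}$ directly after diagonalising $B$ and representing $v = g/\norm{g}$ with $g$ a standard complex Gaussian, or equivalently by Gromov--Milman concentration on $U(d)$. Applied to $B = \Pi_k A \Pi_k$ with $d = d_k$ and $\norm{B} \leq \norm{A}$, this yields for every index $j$ in block $k$ and every $t > 0$
\[
 \Pr\bigl(\bigl|\langle A g_j, g_j\rangle - d_k^{-1}\Tr\Pi_k A\Pi_k\bigr| > t\bigr) \leq 2\exp\bigl(-c\,t^2 d_k / \norm{A}^2\bigr).
\]

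Summing over the $d_k$ indices in each block and then over $k$ gives the upper bound $\sum_k 2 d_k \exp(-c\,t^2 d_k/\norm{A}^2)$; because $d_k > C k^\eps$ makes each term decay faster than any polynomial in $k$, this sum is finite for every fixed $t > 0$ and $A$. Borel--Cantelli then yields $\limsup_j |\langle A g_j, g_j\rangle - d_{k(j)}^{-1}\Tr\Pi_{k(j)} A\Pi_{k(j)}| \leq t$ almost surely, and letting $t$ run through a countable sequence tending to zero together with the Weyl hypothesis produces unique ergodicity for this single $A$. To extend to all $A \in \Psi^0(M)$ simultaneously, I would fix a countable subset dense in a Fr\'echet topology on $\Psi^0(M)$ that controls both $\phi\mapsto\langle A\phi,\phi\rangle$ on unit vectors and $A\mapsto \int\sigma_A\,d\omega$ continuously, run the argument on a single probability-one event, and approximate.

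The main obstacle is the sharpness of the concentration bound in the second step: a variance estimate via the Weingarten calculus alone gives only $O(d_k^{-1})$ decay, and although pushing to the $p$-th moment would still close the argument, the required $p$ depends on $\eps$. The exponential moment, i.e.\ a single subgaussian estimate with variance proxy $\norm{A}^2/d_k$, is what makes the argument uniform and matches the minimal polynomial growth hypothesis $d_k > Ck^\eps$.
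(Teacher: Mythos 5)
Your argument is correct and is essentially the paper's: reduce $\langle A g_j, g_j\rangle$ to $\langle B v, v\rangle$ with $B=\Pi_k A\Pi_k$ and $v$ Haar-uniform on the unit sphere of $\mathcal{H}_k$, prove a subgaussian tail with variance proxy $O(1/d_k)$, take a union bound over the $d_k$ indices in each block, and apply Borel--Cantelli, with the hypothesis $d_k > C k^\eps$ supplying the summability. The only real difference is how the concentration estimate is produced: the paper does it by hand (diagonalise $B$, write the column of the Haar unitary as $\xi_i\sqrt{e_i/\sum_j e_j}$ with independent exponentials, dispose of the denominator on a law-of-large-numbers event, and then bound $\mathbb{E}\exp(t\sum\nu_i e_i)$, giving variance proxy $M/d^2$ with $M=\sum\nu_i^2$), whereas you invoke Gromov--Milman/L\'evy concentration for the $2\norm{B}$-Lipschitz map $v\mapsto\langle Bv,v\rangle$ on the sphere, or its equivalent Gaussian-ratio moment bound, and accept the slightly cruder but entirely adequate variance proxy $\norm{A}^2/d$. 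These are two packagings of the same mechanism, so nothing is lost. One small improvement on your side: you explicitly handle simultaneity over all $A\in\Psi^0(M)$ by passing to a countable dense family and approximating, a step the paper's proof of Theorem~\ref{thm:main} leaves implicit.
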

The additional condition is very weak. Indeed, in most of the applications the intervals will contain approximately $k^r \log(k)^s$ for some $k, s \geq 1$.

The proof can be summarized as follows. As in the theorem of Zelditch, we show that assuming Weyl asymptotics the diagonal matrix coefficients $\langle A g_j^{(k)}, g_j^{(k)} \rangle$ can be given the explicit form
\[
  \langle A g_j^{(k)}, g_j^{(k)} \rangle = \sum_{i=1}^d \nu_i \abs{U_{ij}}^2 + \int_{S^*M} \sigma_A \, d\omega + o(1)
\]
where $(U_{ij})_{i,j=1}^d \in U(\mathcal{H}_k)$ is a random matrix distributed according to Haar measure and $\nu_i$ are the recentered eigenvalues of $\Pi_k A \Pi_k$. Then, we decompose the entries of $(U_{ij})$ with an analogue of the law of large numbers to compare the sum to a sum of independent random variables. This, in turn, can be controlled using the exponential moment method.

After the completion of this work, the author learned of results using analogous methods due to Burq and Lebeau \cite{BL11}. In their article, they show that almost every basis for the space of spherical harmonics is bounded in $L^p$ norms, as well as other estimates for specific geometric situations. This article, in contrast, proves quantum ergodicity for random bases of eigenfunctions for all geometries where the intervals of eigenvalues are growing sufficiently fast.

The organization of this article is as follows. In Section~\ref{sec:assumpt} we review the assumptions on the distribution of the eigenvalues of $P$. These assumptions will allow us to conclude that the eigenvalues of the block diagonal part of the observable $A$ are controlled as we allow the energy to grow to infinity. In Section~\ref{sec:combinatorial} we transform the matrix coefficients $\langle A g_j, g_j \rangle$ into an appropriate sum over the eigenvalues of $\Pi_k A \Pi_k$. Next, Section~\ref{sec:formulae} recalls classical formulas for the coefficients of a Haar-distributed unitary matrix.  The proof of Theorem~\ref{thm:main} is completed in Section~\ref{sec:quantum}.

\section{Assumptions} \label{sec:assumpt}

The assumptions we place on the manifold are the same as those used in \cite{Zel96, Zel12}. We first recall that the eigenvalues of $P$ have a predictable asymptotic distribution, so we can control the dimension of the spaces $\mathcal{H}_k$.

\begin{lemma}[Weyl asymptotics \cite{Hor85b}]
    Let $(M^n,g)$ be a compact Riemannian manifold. Then if $(\lambda_j)_{j=1}^\infty$ is the spectrum of $P$, then
    \[
      \#\{\lambda_j \leq \lambda\} = \operatorname{const.} \mu(S^*M) \lambda^n + O(\lambda^{n-1}).
    \]
    Furthermore, if the geodesic flow $G_t$ is aperiodic, then the error term can be improved to $o(\lambda^{n-1})$.
\end{lemma}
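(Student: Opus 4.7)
The statement is Hörmander's sharp form of the Weyl law for a positive elliptic first-order $\Psi$DO. My plan would follow the hyperbolic (wave-equation) approach: reduce the counting function to the singularity structure of an oscillatory integral, extract the leading asymptotic via a Fourier Tauberian theorem, and use propagation of singularities to sharpen the error under aperiodicity.

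First I would study the half-wave propagator $U(t) = e^{-itP}$. For $\abs{t} < T$ with $T$ smaller than the injectivity radius, Lax--Hörmander parametrix constructions realize $U(t)$ as a Fourier integral operator whose canonical relation is the graph of the (co-)geodesic flow $G_t$ on $T^*M \setminus 0$. The principal symbol is transported by the flow, with initial data a canonical half-density on the zero section. From this one reads off the leading singularity of $\Tr U(t) = \sum_j e^{-it\lambda_j}$ at $t = 0$: by applying stationary phase to the symbolic expression, it takes the form $c_n \mu(S^*M) (t-i0)^{-n}$ modulo lower-order singularities.

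Next I would apply a Fourier Tauberian argument. Picking $\chi \in C_c^\infty((-T,T))$ with $\chi \equiv 1$ near zero, the smoothed derivative of the counting function $\chi \ast dN$ is the Fourier transform of $\chi(t) \Tr U(t)$, which the previous step evaluates as $c_n' \mu(S^*M) \lambda^{n-1} + O(\lambda^{n-2})$. Integrating and applying the standard Hörmander Tauberian theorem then yields $\#\{\lambda_j \leq \lambda\} = \operatorname{const.} \mu(S^*M)\lambda^n + O(\lambda^{n-1})$. For the improvement under aperiodicity I would invoke the Duistermaat--Guillemin trace formula: the wavefront set of $\Tr U(t)$ is contained in $\{0\} \cup \{\pm \text{lengths of closed geodesics}\}$, and when the set of periodic points of $G_t$ in $S^*M$ has measure zero one may widen the support of $\chi$ arbitrarily while keeping the contribution of $t \neq 0$ lower order, upgrading the $O(\lambda^{n-1})$ to $o(\lambda^{n-1})$.

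The main obstacle in executing this from scratch is the parametrix construction for $U(t)$ as a global FIO on $M$ together with the principal-symbol and trace computations — machinery that fills an entire chapter of Hörmander's treatise. Since the lemma is cited to \cite{Hor85b}, in practice I would simply invoke Hörmander's theorem rather than reprove it; the lemma's role in this paper is to feed growth estimates on $\dim \mathcal{H}_k$ into the subsequent probabilistic arguments.
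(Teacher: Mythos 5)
The paper itself gives no proof of this lemma; it is stated as a classical result with a citation to H\"ormander \cite{Hor85b}, exactly as you ultimately propose to do. Your sketch of the wave-trace/Fourier--Tauberian argument together with the Duistermaat--Guillemin refinement under aperiodicity is a correct outline of the standard proof found in the cited reference, so your treatment is consistent with the paper's.
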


Next, we would like to know how the eigenvalues of the projections $\Pi_k A \Pi_k$ develop as $k \to \infty$. We can control the moments of the eigenvalues using the following lemma which is a generalization of an early result of Sz\"ego. To establish Theorem~\ref{thm:main}, we only require upper bounds on the second moment.

\begin{lemma}[Sz\"ego \cite{Gui79, Wid79}]
With $P$, $\lambda_j$ and $I_k$ as defined above, for any observable $A \in \Psi^0(M)$ and for all $m \geq 1$,
\[
  \lim_{k \to \infty} \frac{1}{\#\{\lambda_j \leq \lambda\}} \Tr (\Pi_j A \Pi_j)^m = \int_{S^*M} \sigma_A^m \, d\omega.
\]
where $\sigma_A$ is the principal symbol of $A$.
\end{lemma}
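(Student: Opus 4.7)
The plan is to reduce each moment to a quantity already controlled by the Weyl hypothesis assumed in Theorem~\ref{thm:zelditch}. Since $A^m \in \Psi^0(M)$ and has principal symbol $\sigma_A^m$, that hypothesis applied to $A^m$ gives
\begin{equation*}
  \frac{1}{N_k} \Tr \Pi_k A^m \Pi_k \longrightarrow \int_{S^*M} \sigma_A^m \, d\omega,
\end{equation*}
where $N_k := \dim \mathcal{H}_k$, so it suffices to prove the comparison estimate
\begin{equation*}
  \Tr \Pi_k A^m \Pi_k - \Tr (\Pi_k A \Pi_k)^m = o(N_k).
\end{equation*}

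To produce this comparison I would insert the resolution $I = \Pi_k + Q_k$ (with $Q_k := I - \Pi_k$) between each consecutive pair of $A$'s in $\Pi_k A \cdot A \cdots A \Pi_k$ and expand. The result is a sum of $2^{m-1}$ terms, one of which is exactly $(\Pi_k A \Pi_k)^m$ and the other $2^{m-1}-1$ of which each contain at least one internal factor of $Q_k$. For each such error term, the cyclic property of the trace places a $Q_k$ at a chosen splitting point, after which Cauchy--Schwarz in the Hilbert--Schmidt inner product bounds the error by a product of the form $\norm{\Pi_k C}_{\rm HS} \cdot \norm{Q_k B \Pi_k}_{\rm HS}$ for some $B, C \in \Psi^0(M)$ built from powers of $A$. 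Since $\norm{\Pi_k C}_{\rm HS}^2 \leq \norm{C}^2 N_k$ for any bounded $C$, the whole problem collapses to a single off-diagonal estimate,
\begin{equation*}
  \norm{Q_k B \Pi_k}_{\rm HS} = o(N_k^{1/2}) \qquad \text{for every } B \in \Psi^0(M).
\end{equation*}

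This Hilbert--Schmidt bound is the main analytic obstacle, and it is where the proof becomes genuinely delicate. Because $\Pi_k$ is a \emph{sharp} spectral cutoff of the first order operator $P$, the commutator $[B, \Pi_k]$ is not itself pseudodifferential and the symbol calculus cannot be applied directly. My approach would be to approximate $\Pi_k$ by a smooth functional calculus element $\chi_k(P)$, where $\chi_k$ is a mollification of the indicator of $I_k$ equal to $1$ on a slight shrinking and vanishing outside a slight enlargement. For smooth $\chi_k$ one has $[B, \chi_k(P)] \in \Psi^{-1}(M)$, and Weyl asymptotics applied to the positive operator $[B,\chi_k(P)]^*[B,\chi_k(P)] \in \Psi^{-2}(M)$ yield the required $o(N_k^{1/2})$ control. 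The difference $\Pi_k - \chi_k(P)$ is supported on a thin spectral layer at the endpoints of $I_k$ and is handled by a Tauberian estimate on the eigenvalue counting function near those endpoints. This boundary-layer analysis is the technical heart of the Guillemin--Widom theorem and is the step where essentially all of the work is invested; everything else is algebraic bookkeeping.
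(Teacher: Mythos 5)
The paper does not prove this lemma at all: it is imported verbatim from Guillemin and Widom \cite{Gui79, Wid79} as a known input, so there is no internal proof to compare yours against. Judged on its own, your reduction is the standard and correct opening move for Szeg\H{o}-type theorems. Applying the assumed local Weyl law to $A^m \in \Psi^0(M)$ (principal symbol $\sigma_A^m$) is legitimate, and the telescoping insertion of $I = \Pi_k + Q_k$ does reduce everything to $\norm{Q_k A \Pi_k}_{\rm HS} = o(N_k^{1/2})$: each error term $\Tr(\Pi_k A w_1 \cdots w_{m-1} A \Pi_k)$ with a leftmost internal $Q_k$ factors, after Cauchy--Schwarz, as $\norm{Q_k A^* \Pi_k}_{\rm HS} \cdot O(\norm{A}^{m-1}) \cdot N_k^{1/2} = o(N_k)$, using $\norm{RS}_{\rm HS} \leq \norm{R}_{\rm HS}\norm{S}$ to peel off the alternating $\Pi_k A$ blocks. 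For $m=2$ the identity $\Tr \Pi_k A^2 \Pi_k - \Tr(\Pi_k A \Pi_k)^2 = \norm{Q_k A \Pi_k}_{\rm HS}^2$ shows this off-diagonal estimate is not merely sufficient but essentially equivalent to the claim, so you have correctly located where all the content lives.

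The gap is that this key estimate is asserted, not proved, and it is the entire theorem. Your final paragraph is a plausible summary of the Guillemin--Widom strategy (smooth the spectral cutoff, use the symbol calculus for $[B,\chi_k(P)]$, control the boundary layer by a Tauberian argument), but as written it cannot be checked: the Tauberian endpoint analysis requires quantitative hypotheses on the intervals $I_k$ --- their widths must not shrink faster than the Weyl remainder allows, and one typically needs the sharpened $o(\lambda^{n-1})$ remainder coming from aperiodicity of the geodesic flow --- and neither you nor the paper states these. Without such a hypothesis the estimate $\norm{Q_k B \Pi_k}_{\rm HS} = o(N_k^{1/2})$ can fail (e.g.\ for very short intervals the boundary layer is the whole of $\mathcal{H}_k$). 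So your write-up is an accurate roadmap to the cited literature rather than a proof; to close it you would either have to carry out the boundary-layer estimate under explicit assumptions on $I_k$, or do as the paper does and cite \cite{Gui79, Wid79} for the lemma outright. (Separately, note the statement as printed in the paper has typographical slips --- $\Pi_j$ for $\Pi_k$ and the normalization $\#\{\lambda_j \leq \lambda\}$ where $\dim \mathcal{H}_k$ is meant --- which your version silently and correctly repairs.)
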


Finally, we observe that because the observable $A$ is self-adjoint, we can assume a priori absolute bounds on the eigenvalues of the projections $\Pi_k A \Pi_k$, as follows.

\begin{lemma}[Trivial bound]
Let $A \in \Psi^0(M)$ be a zero order Hermitian pseudo-differential operator on a compact Riemannian manifold $(M,g)$. Let $\Pi$ be a projection onto a $d$-dimensional subspace of $L^2(M)$, $d < \infty$, and let $-\infty < \nu_1 \leq \cdots \leq \nu_d < \infty$ denote the eigenvalues of $\Pi A \Pi$. We then have the bounds
\[
  - \norm{A} \leq \nu_1 \leq \cdots \leq \nu_d \leq \norm{A}.
\]
\end{lemma}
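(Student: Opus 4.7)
The plan is to reduce this to the elementary fact that a self-adjoint compression of a bounded self-adjoint operator has its spectrum trapped between $\pm \norm{A}$. There are two ingredients one must assemble: boundedness of $A$ on $L^2$, and the variational characterization of eigenvalues.

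First I would recall that a zero order pseudo-differential operator on a compact manifold extends to a bounded operator on $L^2(M)$ (this is the Calder\'on-Vaillancourt theorem, or for the classical symbol classes considered here, a standard consequence of the H\"ormander calculus), so the quantity $\norm{A}$ in the statement refers to this $L^2$ operator norm, and $\norm{A} < \infty$. Second, since $\Pi^* = \Pi$ and $A^* = A$, the compression $\Pi A \Pi$ is self-adjoint on $L^2(M)$, so its restriction to the $d$-dimensional range of $\Pi$ is a self-adjoint operator on a finite-dimensional Hilbert space, whose eigenvalues $\nu_1 \leq \cdots \leq \nu_d$ are all real.

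Now let $v$ be a normalized eigenvector of $\Pi A \Pi$ inside $\operatorname{range}(\Pi)$ with eigenvalue $\nu$. Since $\Pi v = v$ and $\Pi$ is self-adjoint,
\[
\nu = \langle \Pi A \Pi v, v\rangle = \langle A \Pi v, \Pi v \rangle = \langle A v, v \rangle,
\]
and by the Cauchy-Schwarz inequality $\abs{\langle A v, v \rangle} \leq \norm{A v} \norm{v} \leq \norm{A}$. Applying this to each eigenvector gives $-\norm{A} \leq \nu_i \leq \norm{A}$ for $i = 1, \dots, d$, which is the claim. (Equivalently, one may invoke the general submultiplicativity of the operator norm under composition to obtain $\norm{\Pi A \Pi} \leq \norm{\Pi}^2 \norm{A} \leq \norm{A}$, from which the bound on the spectrum is immediate.)

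There is really no obstacle here: the content of the lemma is just that compression by a projection cannot enlarge the spectrum of a self-adjoint operator, together with the observation that $A$ is bounded on $L^2$. The lemma is recorded for later reference so that, when combined with the Szeg\H{o}-type moment asymptotics of the preceding lemma, one has both an almost-sure growth control on the eigenvalues of $\Pi_k A \Pi_k$ and a uniform deterministic cap, which together drive the concentration estimates used in Sections~\ref{sec:formulae} and \ref{sec:quantum}.
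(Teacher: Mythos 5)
Your argument is correct and matches the paper's proof, which simply observes that $\norm{\Pi A \Pi} \leq \norm{A} < \infty$ by submultiplicativity of the operator norm and that $\Pi A \Pi$ is self-adjoint; your Rayleigh-quotient computation is just a slightly more explicit version of the same reasoning.
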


\begin{proof}
Note that $\norm{\Pi_k A \Pi_k} \leq \norm{A} < \infty$ uniformly in $k$, and the inequalities follow because $\Pi_k A \Pi_k$ is self-adjoint.
\end{proof}

\section{A combinatorial reduction} \label{sec:combinatorial}

We begin by converting the matrix coefficients $\langle A \psi_j^{(k)}, \psi_j^{(k)} \rangle$ into combinatorial formula in terms of the eigenvalues of $\Pi_k A \Pi_k$. In fact, we show that every matrix coefficient has a nice interpretation in terms of the coefficients of a Haar distributed unitary matrix.

\begin{proposition} \label{prop:combinatorial}
Let $P$, $A$, $I_k$, $\mathcal{H}_k$ and $\Pi_k$ be as above. Let $f_1^{(k)}, \ldots, f_d^{(k)}$ denote the eigenfunctions of $P$ with corresponding eigenvalues in $I_k$, so that they form an orthonormal basis of $\mathcal{H}_k$. Suppose $\nu_1, \ldots, \nu_d$ are the eigenvalues of $\Pi_k A \Pi_k$ and $U \in U(\mathcal{H}_k)$ is distributed according to Haar measure. Then there is a random $V \in U(\mathcal{H}_k)$, also distributed according to Haar measure, so that for all $1 \leq i, j \leq d$ we have
\[
  \langle A U f_i^{(k)}, U f_j^{(k)} \rangle = \sum_{\ell = 1}^d \nu_\ell V_{\ell, i} \overline{V_{\ell, j}}
\]
where $V_{\ell,i}$ and $V_{\ell,j}$ are the coefficients of $V$ in the basis $f_1^{(k)}, \ldots, f_d^{(k)}$. In particular, on the diagonal we have
\[
  \langle A U f_i^{(k)}, U f_i^{(k)} \rangle = \sum_{\ell = 1}^d \nu_\ell \abs{V_{\ell, i}}^2.
\]
\end{proposition}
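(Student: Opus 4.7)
The plan is to reduce the computation to a diagonal form via a change of basis, then transfer the Haar-randomness through that change of basis using the invariance of Haar measure.

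First I would observe that since $Uf_i^{(k)}, Uf_j^{(k)} \in \mathcal{H}_k$ and $\Pi_k$ is the orthogonal projection onto $\mathcal{H}_k$, we have
\[
\langle A U f_i^{(k)}, U f_j^{(k)} \rangle = \langle \Pi_k A \Pi_k \, U f_i^{(k)}, U f_j^{(k)} \rangle,
\]
so only the compressed operator $B := \Pi_k A \Pi_k|_{\mathcal{H}_k}$ is relevant. Since $A$ is Hermitian, $B$ is a self-adjoint endomorphism of the finite-dimensional space $\mathcal{H}_k$, and the spectral theorem gives a unitary $W \in U(\mathcal{H}_k)$ with $B = W D W^*$, where $D$ is the diagonal operator (in the basis $f_1^{(k)}, \dots, f_d^{(k)}$) with entries $\nu_1, \dots, \nu_d$.

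Next I would define $V := W^* U$. Because $U$ is Haar-distributed on the compact group $U(\mathcal{H}_k)$ and Haar measure is left-invariant, $V$ is also Haar-distributed. Substituting $U = WV$ yields
\[
\langle B U f_i^{(k)}, U f_j^{(k)} \rangle
= \langle W D W^* (WV) f_i^{(k)}, (WV) f_j^{(k)} \rangle
= \langle D V f_i^{(k)}, V f_j^{(k)} \rangle,
\]
using $W^*W = \mathrm{id}$ in the second step.

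Finally I would expand $V f_i^{(k)} = \sum_\ell V_{\ell,i} f_\ell^{(k)}$ (and similarly for $j$) in the eigenbasis of $P$; applying $D$ and taking the inner product produces $\sum_\ell \nu_\ell V_{\ell,i} \overline{V_{\ell,j}}$, which is the claimed formula. The diagonal case $i=j$ is then immediate. There is no real obstacle here; the only point requiring a little care is to recognize that the matrix entries $V_{\ell,i}$ in the statement must be computed in the $P$-eigenbasis of $\mathcal{H}_k$ (not the eigenbasis of $B$), which is exactly what the calculation above produces.
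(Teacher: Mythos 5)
Your proposal is correct and follows essentially the same route as the paper: diagonalize the compression $\Pi_k A \Pi_k$ by the spectral theorem, absorb the (deterministic) diagonalizing unitary into $U$ via invariance of Haar measure to get $V$, and expand in the $P$-eigenbasis. Your $W$ is exactly the paper's change-of-basis operator $T$, and your $V = W^*U$ matches its $V = T^*U$.
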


\begin{proof}
Since $\Pi_k A \Pi_k$ is Hermitian, by the spectral theorem there exists a basis $\phi_1, \ldots, \phi_d$ of $\mathcal{H}_k$ so that $\Pi_k A \Pi_k \phi_j = \nu_j \phi_j$ for each $1 \leq j \leq d$. If we expand $U f_i^{(k)}$ and $U f_j^{(k)}$ into this basis and expand the sum, we deduce
\[
  \langle A U f_i^{(k)}, U f_j^{(k)} \rangle = \sum_{\ell = 1}^d \nu_\ell \langle U f_i^{(k)}, \phi_\ell \rangle \overline{\langle U f_j^{(k)}, \phi_\ell \rangle}.
\]
Let $T : \mathcal{H}_k \to \mathcal{H}_k$ denote the unitary change-of-basis operator mapping $f_\ell^{(k)} \mapsto \phi_\ell$. Note that this mapping is deterministic (so it does not depend on $U$). Then $\langle U f_i^{(k)}, \phi_\ell \rangle = \langle T^* U f_i^{(k)}, f_\ell^{(k)} \rangle = \langle V f_i^{(k)}, f_\ell^{(k)} \rangle$ where $V := T^* U$ is distributed according to Haar measure by invariance. We thus conclude that
\[
  \langle A U f_i^{(k)}, U f_j^{(k)} \rangle = \sum_{\ell = 1}^d \nu_\ell \langle V f_i^{(k)}, f_\ell^{(k)} \rangle \overline{\langle V f_j^{(k)}, f_\ell^{(k)} \rangle} = \sum_{\ell = 1}^d \nu_\ell V_{\ell, i} \overline{V_{\ell, j}}
\]
as required.
\end{proof}
It is convenient to recenter the eigenvalues of $\Pi_k A \Pi_k$ so that the sum is balanced around zero. This is possible precisely when the projections $\Pi_k$ satisfy local Weyl asymptotics.
\begin{proposition} \label{prop:recenter}
Let $P$, $A$, $I_k$, $\mathcal{H}_k$, $\Pi_k$, $U$, and $V$ be as above. Let $\nu_1, \ldots, \nu_d$ denote the eigenvalues of $\Pi_k A \Pi_k$. Suppose that the sequence of projections satisfies local Weyl asymptotics, i.e.
\[
  \frac{1}{d} \Tr \Pi_k A \Pi_k = \frac{1}{d} \sum_{i=1}^d \nu_i \xrightarrow{k \to \infty} \int_{S^*M} \sigma_A \, d\omega
\]
where $\sigma_A$ is the principal symbol of $A$. Then for all $1 \leq i, j \leq d$ we have
\[
  \langle A U f_i^{(k)}, U f_j^{(k)} \rangle = \delta_{i,j} \int_{S^*M} \sigma_A \, d\omega + \sum_{\ell = 1}^{d} \eta_\ell V_{\ell,i} \overline{V_{\ell,j}} + o(1)
\]
where
\[
  \eta_\ell = \nu_\ell - d^{-1} \sum_{t = 1}^d \nu_t
\]
and the implied constant is uniform in $i$ and $j$.
\end{proposition}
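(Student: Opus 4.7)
The plan is to start from the identity established in Proposition~\ref{prop:combinatorial} and split the sum into a mean part and a fluctuation part. Writing $\bar\nu := d^{-1} \sum_{t=1}^d \nu_t$, we have $\nu_\ell = \bar\nu + \eta_\ell$, so
\[
  \sum_{\ell=1}^d \nu_\ell V_{\ell,i} \overline{V_{\ell,j}}
  \;=\; \bar\nu \sum_{\ell=1}^d V_{\ell,i} \overline{V_{\ell,j}}
  \;+\; \sum_{\ell=1}^d \eta_\ell V_{\ell,i} \overline{V_{\ell,j}}.
\]

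The first step is to observe that since $V$ is unitary, the columns of $V$ in the basis $f_1^{(k)}, \ldots, f_d^{(k)}$ are orthonormal, which gives the deterministic identity $\sum_{\ell=1}^d V_{\ell,i} \overline{V_{\ell,j}} = \delta_{i,j}$. This removes all randomness from the mean part and turns it into $\bar\nu \, \delta_{i,j}$.

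The second step is to invoke the hypothesis that $\Pi_k$ satisfies local Weyl asymptotics: by assumption $\bar\nu \to \int_{S^*M} \sigma_A \, d\omega$ as $k \to \infty$. Hence $\bar\nu \, \delta_{i,j} = \delta_{i,j} \int_{S^*M} \sigma_A \, d\omega + o(1) \cdot \delta_{i,j}$, with the $o(1)$ depending only on $k$ and not on $i$ or $j$. Combining the two steps gives the claimed decomposition with a uniform error.

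There is no substantive obstacle here, only bookkeeping: the two things worth checking are that the orthonormality identity is applied to the \emph{columns} of $V$ (rather than rows), which is the correct interpretation since $V_{\ell,i} = \langle V f_i^{(k)}, f_\ell^{(k)} \rangle$, and that uniformity of the error in $i,j$ is automatic because the $o(1)$ term $\bar\nu - \int_{S^*M} \sigma_A \, d\omega$ is independent of $i,j$. The fluctuation term $\sum_{\ell=1}^d \eta_\ell V_{\ell,i} \overline{V_{\ell,j}}$ is left untouched; controlling it probabilistically is the content of the subsequent sections.
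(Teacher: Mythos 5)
Your proposal is correct and matches the paper's own (much terser) argument: the paper likewise derives the result from Proposition~\ref{prop:combinatorial} together with the unitarity identity $\sum_{\ell=1}^d V_{\ell,i}\overline{V_{\ell,j}} = \delta_{i,j}$, with the local Weyl hypothesis supplying the $o(1)$ replacement of $\bar\nu$ by $\int_{S^*M}\sigma_A\,d\omega$. Your added remarks on columns versus rows and on uniformity in $i,j$ are accurate bookkeeping that the paper leaves implicit.
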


\begin{proof}
This follows from Proposition~\ref{prop:combinatorial} and the identities
\[
  \sum_{\ell = 1}^d V_{\ell,i} \overline{V_{\ell,j}} = \delta_{i,j}
\]
which hold because $V$ is unitary.
\end{proof}

\section{Explicit formulae for Haar unitary matrices} \label{sec:formulae}

Proposition~\ref{prop:combinatorial} suggests that it suffices for us to understand the distribution of the entries of a random unitary matrix in a fixed basis. For our application, it suffices for us to consider the distribution of the entries in one column of the matrix.  It is well-known that each column takes values uniformly in the complex unit sphere, which can easily be seen by the invariance of Haar measure. For convenience, we will state the result in probabilistic language, which amounts to a change of variables.

\begin{proposition} \label{prop:lawofsphere}
Suppose $X \in \C^d$ is distributed uniformly on the unit sphere. Then there are independent random variables $\xi_1, \ldots, \xi_d, e_1, \ldots, e_d$, with $\xi_1, \ldots, \xi_d$ uniformly distributed on $\{z \in \C \mid \abs{z} = 1\}$ and $e_1, \ldots, e_d$ with density $e^{-x} \, dx$ on $\R^+$, so that
\[
  X = (\xi_1 \sqrt{\frac{e_1}{e_1 + \cdots + e_d}}, \ldots, \xi_d \sqrt{\frac{e_d}{e_1 + \cdots + e_d}})
\]
as random variables.
\end{proposition}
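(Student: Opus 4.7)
The plan is to realize the uniform distribution on the complex sphere as a normalization of a standard complex Gaussian vector, and then read off the polar decomposition.

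First I would let $Z_1,\ldots,Z_d$ be independent standard complex Gaussian random variables, each with density $\pi^{-1} e^{-\abs{z}^2}\,dA(z)$ with respect to Lebesgue measure on $\C$. The joint density of $Z = (Z_1,\ldots,Z_d)$ on $\C^d$ is $\pi^{-d} e^{-\abs{z_1}^2 - \cdots - \abs{z_d}^2}$, which depends only on $\norm{Z}$. In particular it is invariant under the unitary group $U(d)$, so by the usual argument (uniqueness of rotation-invariant probability measures on the sphere) $Z/\norm{Z}$ is distributed uniformly on the unit sphere of $\C^d$ and is moreover independent of $\norm{Z}$.

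Next I would introduce the polar decomposition of each coordinate: write $Z_j = \xi_j R_j$ where $\xi_j = Z_j/\abs{Z_j}$ and $R_j = \abs{Z_j}$. The independence of the $Z_j$ together with the rotation invariance of the density of each $Z_j$ shows that the $2d$ random variables $\xi_1,\ldots,\xi_d, R_1,\ldots,R_d$ are jointly independent, with each $\xi_j$ uniform on the unit circle. A short change-of-variables computation (integrating out the angular variable of $Z_j$) shows that $R_j$ has density $2 r e^{-r^2}\,dr$ on $\R^+$; equivalently, setting $e_j := R_j^2 = \abs{Z_j}^2$ and changing variables gives $e_j$ with density $e^{-x}\,dx$ on $\R^+$. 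Independence is preserved under these coordinatewise transformations, so the $\xi_j$ and $e_j$ are all mutually independent with the claimed marginal distributions.

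Finally I would combine the two reductions. Since $X := Z/\norm{Z}$ is uniform on the sphere and has the claimed law, it suffices to rewrite its coordinates in the new variables:
\[
  X_j = \frac{Z_j}{\norm{Z}} = \frac{\xi_j R_j}{\sqrt{R_1^2 + \cdots + R_d^2}} = \xi_j \sqrt{\frac{e_j}{e_1 + \cdots + e_d}},
\]
which is exactly the asserted representation.

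There is no real obstacle here; the proof is a standard application of the rotation invariance of the complex Gaussian together with the fact that the squared modulus of a standard complex Gaussian is a unit-rate exponential. The only point that requires a moment of care is verifying the joint independence of all $2d$ variables $\xi_j, e_j$, which follows immediately from the product form of the Gaussian density and the fact that $\xi_j$ and $R_j$ are independent for each fixed $j$ because the density of $Z_j$ factors in polar coordinates.
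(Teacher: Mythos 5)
Your proof is correct and follows essentially the same route as the paper: both realize the uniform law on the sphere by normalizing a complex Gaussian vector and then pass to polar coordinates in each component to identify the angular parts as uniform phases and the squared moduli as exponentials. The only cosmetic difference is that you normalize the Gaussian so that $\abs{Z_j}^2$ is unit-rate exponential directly, whereas the paper uses the density $e^{-\pi\abs{y}^2}$ and rescales at the end.
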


\begin{proof}
Recall that the uniform distribution on the unit sphere in $\C^d$ can be constructed with a random vector $Y \in \R^{2d}$ with Gaussian density $e^{- \pi \abs{y}^2} \, dy$. Indeed, if $Y$ is such a vector then with probability one it is non-zero, so the normalized vector $Y / \abs{Y}$ is well-defined and on the unit circle in $\R^{2d} \cong \C^d$ with the mapping $(y_1, \ldots, y_{2d}) \mapsto (y_1 + i y_2, \ldots, y_{2d-1} + i y_{2d})$. The Gaussian is trivially invariant under orthogonal transformation so the density of $Y / \abs{Y}$ is uniform on the unit sphere.

To compute the densities, for each $1 \leq k \leq d$ we change variables to see
\[
  e^{- \pi (y_{2k-1}^2 + y_{2k}^2)} \, dy_{2k-1} dy_{2k} = r_k \exp(-\pi r_k^2) \, dr d\xi_k = \exp(- \pi e_k) \, de_k d\xi_k.
\]
Here each $\xi_k$ is uniform on the unit circle in $\C$. Thus, the result follows after scaling the densities of $e_1, \ldots, e_d$ by a common factor so that their density on $\R^+$ is $e^{-x} \, dx$, as required.
\end{proof}

The denominators in the previous formula are inconvenient because they introduce dependencies between the different coefficients of the matrix. The next proposition shows that we can replace them with a constant factor at the cost of an arbitrarily small error in our representation, so that the coefficients are close to independent.

\begin{lemma}[Law of large numbers] \label{lem:sumofexponential}
Let $e_1, \ldots, e_d$ be iid exponential random variables with distributions $e^{-x} \, dx$. Then for all $0 < \delta$ less than some absolute constant, there is a random variable $\theta \in \R$ with $\abs{\theta} < \delta$ almost surely and
\[
  e_1 + \cdots + e_d = (1 + \theta) d
\]
with probability $1 - O(e^{- C \delta^2 d})$, where the constants are absolute.
\end{lemma}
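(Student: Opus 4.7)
The plan is to prove the two-sided concentration $\abs{e_1 + \cdots + e_d - d} < \delta d$ with the claimed probability by the standard Chernoff bound, and then to obtain $\theta$ by truncating it on the bad event so that the almost-sure bound holds unconditionally.

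Write $S := e_1 + \cdots + e_d$, a sum of iid $\mathrm{Exp}(1)$ variables with mean $d$. For $t < 1$, the single-variable moment generating function is
\[
  \mathbb{E}[\exp(t e_i)] = \int_0^\infty e^{tx} e^{-x}\, dx = (1-t)^{-1},
\]
and by independence $\mathbb{E}[\exp(tS)] = (1-t)^{-d}$. Markov's inequality then yields
\[
  \mathbb{P}(S \geq (1+\delta) d) \leq \exp(-t(1+\delta)d)(1-t)^{-d}
\]
for any $0 < t < 1$, and the choice $t = \delta/(1+\delta)$ optimizes the right-hand side to $\exp(d(\log(1+\delta) - \delta))$. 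Expanding $\log(1+\delta) = \delta - \delta^2/2 + O(\delta^3)$ for $\delta$ small gives an upper bound $\exp(-C\delta^2 d)$.

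For the lower tail I would run the symmetric argument using $\mathbb{E}[\exp(-t e_i)] = (1+t)^{-1}$ (valid for all $t > 0$), taking $t = \delta/(1-\delta)$, to obtain
\[
  \mathbb{P}(S \leq (1-\delta) d) \leq \exp(d(\log(1-\delta) + \delta)) \leq \exp(-C'\delta^2 d),
\]
where the last inequality uses $\log(1-\delta) = -\delta - \delta^2/2 + O(\delta^3)$; this is where the hypothesis that $\delta$ is less than an absolute constant enters, to ensure the cubic Taylor remainder does not dominate the quadratic leading term. A union bound then gives $\abs{S - d} < \delta d$ with probability $1 - O(\exp(-C\delta^2 d))$.

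Finally, I would set $\theta := (S-d)/d$ on the good event $\{\abs{S-d} < \delta d\}$ and $\theta := 0$ on its complement. Then $\abs{\theta} < \delta$ surely, while the identity $S = (1+\theta)d$ holds precisely on the good event, hence with probability $1 - O(\exp(-C\delta^2 d))$. There is no real obstacle in this argument: the only design decision is the truncation defining $\theta$ on the bad event, and the remainder is the routine Chernoff computation plus a Taylor expansion of $\log(1 \pm \delta)$ about zero.
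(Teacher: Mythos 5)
Your proposal is correct and follows essentially the same route as the paper: a Chernoff/exponential-moment bound on each tail using $\mathbb{E}[\exp(\pm t e_i)] = (1 \mp t)^{-1}$, followed by defining $\theta$ as the normalized deviation truncated to zero on the bad event. The only cosmetic difference is that you optimize the Chernoff exponent exactly and Taylor-expand $\log(1\pm\delta)$, whereas the paper first replaces $(1-t)^{-1}$ by an explicit quadratic-exponential upper bound and then optimizes; both yield the same $\exp(-C\delta^2 d)$ rate.
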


\begin{proof}
We begin with the upper bound. We bound by the exponential moment and the independence of $e_1, \ldots, e_d$, for all $0 < t < 1$,
\[
  \mathbb{P}(e_1 + \cdots + e_d > (1 + \delta) d) \leq \exp(-t(1 + \delta) d) \prod_{j=1}^d \mathbb{E} \exp(t e_j).
\]
We have
\[
  \mathbb{E} \exp(t e_j) = \int_0^\infty \exp((t-1) x) \, dx = \frac{1}{1 - t}
\]
by computation. With the asymptotic
\[
  \exp(-t-\frac12 t^2) \geq 1 -t \geq \exp(-t-\frac{5}{8} t^2)
\]
for $0 < t < 1/4$, we bound
\begin{align*}
  \mathbb{P}(e_1 + \cdots + e_d > (1 + \delta) d) &\leq \exp(-t(1 + \delta) d + td + \frac58 t^2 d) \\
  &\leq \exp(-t\delta d + \frac{5}{8} t^2 d).
\end{align*}
On the interval $0 < t < 1/4$ where this equation is valid, the coefficient of $d$ is minimized at $t = \frac{4}{5} \delta$, assuming $\delta < \frac{5}{16}$. We thus have
\[
  \mathbb{P}(e_1 + \cdots + e_d > (1 + \delta) d) \leq \exp(- \frac45 \delta^2 d)
\]
as required.

For the lower bound,
\[
  \mathbb{P}(e_1 + \cdots + e_d < (1 - \delta) d) \leq \exp(t (1 - \delta) d) \prod_{j=1}^d \mathbb{E} \exp(-t e_j)
\]
for all $0 < t$. As before we have
\[
  \mathbb{E} \exp(-t e_j) = \frac{1}{1 + t} \leq \exp(-t + \frac12 t^2)
\]
for all positive $t$, so
\[
  \mathbb{P}(e_1 + \cdots + e_d < (1 - \delta) d) \leq \exp(-\delta t d + \frac12 t^2 d)
\]
and, choosing $t = \delta$ the lower bound follows.

To construct $\theta$, we thus let $E$ denote the event that both the upper and lower bounds hold and set
\[
  \theta = (d^{-1}(e_1 + \cdots + e_d) - 1) 1_E
\]
which verifies the conditions required.
\end{proof}

\section{Quantum ergodicity via decorrelation} \label{sec:quantum}

In this section we show how to use the decomposition of a random matrix to analyze the random sums appearing in Proposition~\ref{prop:combinatorial}.

In \cite{Zel12}, Zelditch controls the sums over eigenvalues from Proposition~\ref{prop:combinatorial} by computing the moments of a certain polytope. Namely, if $\nu_1, \ldots, \nu_d$ are the eigenvalues of $\Pi_k A \Pi_k$, then he shows that it suffices to compute the moments of the polytope $P$ which is the set of convex combinations of $\nu_{\tau(1)}, \ldots, \nu_{\tau(d)}$ for $\tau \in S_d$. It turns out that we can control the matrix coefficients simply with Proposition~\ref{lem:sumofexponential} and the exponential moment method. The following argument is standard in combinatorial probability.
\begin{proposition} \label{prop:expmomentmethod}
Let $\nu_1, \ldots, \nu_d \in \R$ be such that
\[
  -D \leq \nu_1 \leq \cdots \leq \nu_d \leq D
\]
and so that $\sum_{k=1}^d \nu_k = 0$. Furthermore, suppose that
\[
  M := \sum_{k=1}^d \abs{\nu_k}^2 < \infty.
\]
Let $U \in U(d)$ be a Haar distributed random unitary matrix. Then for each $1 \leq i \leq d$ we have the large deviation inequality
\[
  \mathbb{P}(\abs{\sum_{k=1}^d \nu_k \abs{U_{k,i}}^2} > \alpha) = O(\exp(-c'' \alpha^2 d^2 / M) + M \exp(-c d))
\]
which holds for all $\alpha = O(M D^{-1} d^{-1})$, where the constants are absolute.
\end{proposition}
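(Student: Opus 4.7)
The plan is to combine the sphere decomposition from Proposition~\ref{prop:lawofsphere} with the law of large numbers from Lemma~\ref{lem:sumofexponential} to reduce the claim to a Chernoff-type bound for a sum of independent centered random variables.

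First, I would use Proposition~\ref{prop:lawofsphere} to write the $i$-th column of $U$ as $(\xi_k \sqrt{e_k/S_d})_{k=1}^d$ where $S_d = e_1 + \cdots + e_d$ and $e_1, \ldots, e_d$ are i.i.d.\ Exp$(1)$. Hence
\[
  \sum_{k=1}^d \nu_k \abs{U_{k,i}}^2 \stackrel{d}{=} \frac{\sum_{k=1}^d \nu_k e_k}{S_d}.
\]
The centering hypothesis $\sum_k \nu_k = 0$ lets me replace the numerator by $S := \sum_k \nu_k (e_k - 1)$ at no cost. By Lemma~\ref{lem:sumofexponential}, outside an event of probability $O(\exp(-c\delta^2 d))$ the denominator lies in $[(1-\delta)d,\,(1+\delta)d]$, so on this event the absolute value of the ratio is at most $\abs{S}/((1-\delta)d)$.

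Next, I would apply the exponential moment method to $S$. The moment generating function of $e_k - 1$ is $e^{-t}/(1-t)$ for $t<1$, and a short Taylor expansion gives $-t\nu_k - \log(1 - t\nu_k) \leq t^2 \nu_k^2$ whenever $\abs{t\nu_k} \leq 1/2$. By independence,
\[
  \log \mathbb{E} e^{tS} \leq t^2 \sum_k \nu_k^2 = t^2 M,
\]
valid whenever $\abs{t} \leq 1/(2D)$. Markov then gives $\mathbb{P}(\pm S > \beta) \leq \exp(-t\beta + t^2 M)$, and choosing $t = \beta/(2M)$ (permitted as long as $\beta \lesssim M D^{-1}$) yields $\mathbb{P}(\abs{S} > \beta) \leq 2\exp(-\beta^2/(4M))$. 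Setting $\beta = (1-\delta)\alpha d$ produces the main term $\exp(-c'' \alpha^2 d^2/M)$ precisely under the admissibility constraint $\alpha = O(MD^{-1}d^{-1})$; a union bound with the complement of the LLN event from Lemma~\ref{lem:sumofexponential} supplies the second error term.

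The main obstacle I anticipate is matching the precise form of the second error $M\exp(-cd)$ in the statement: Lemma~\ref{lem:sumofexponential} alone contributes only $\exp(-c\delta^2 d)$, so the factor $M$ must arise from a cruder bound on the contribution of the bad event $E^c = \{\abs{S_d - d} \geq \delta d\}$. I would handle this by estimating $\mathbb{P}(\{\abs{S}/S_d > \alpha\} \cap E^c)$ via Cauchy--Schwarz, using the second moment $\mathbb{E} S^2 = M$ together with an integrability estimate for $1/S_d^2$ on $E^c$; this should introduce exactly one factor of $M$ and leave the exponential decay rate $e^{-cd}$ intact.
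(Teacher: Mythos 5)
Your proposal is correct and follows essentially the same route as the paper: reduce to exponential variables via Proposition~\ref{prop:lawofsphere}, control the denominator with Lemma~\ref{lem:sumofexponential}, and apply the exponential moment method to $\sum_k \nu_k e_k$ using the moment generating function $1/(1-t\nu_k)$ together with the centering $\sum_k \nu_k = 0$, with the choice $t \asymp \alpha d/M$ forced into the admissible range $tD \lesssim 1$ exactly by the hypothesis $\alpha = O(MD^{-1}d^{-1})$. Your anticipated obstacle about the factor $M$ in the second error term is a non-issue: the paper handles the bad event by a crude Cauchy--Schwarz bound on the sum there followed by $\mathbb{P}(E^c) \leq e^{-c\delta^2 d}$, and the stated $M\exp(-cd)$ is if anything more generous than what the argument (yours or the paper's) actually yields.
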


\begin{proof}
By Proposition~\ref{prop:lawofsphere} and Lemma~\ref{lem:sumofexponential} we can write the sum in the form
\[
  \sum_{k=1}^d \nu_k \abs{U_{k,i}}^2 = \sum_{k=1}^d \nu_k \frac{e_k}{e_1 + \cdots + e_d}
\]
where $e_1, \ldots, e_d$ are a family of independent exponential random variables with density $e^{-x} \, dx$ on $\R^+$. Fix a $\delta > 0$. By Lemma~\ref{lem:sumofexponential}, there is an event $E$ with $\mathbb{P}(E) = 1 - e^{-c \delta^2 d}$ and a random variable $\theta$ with $\abs{\theta} < \delta$ such that
\[
\sum_{k=1}^d \nu_k \frac{e_k}{e_1 + \cdots + e_d} = 1_E \frac{1}{d(1 + \theta)} \sum_{k=1}^d \nu_k e_k + O(1_{E^c} M),
\]

where we applied Cauchy-Schwarz in the second term. We have by Markov's inequality for any $t > 0$ and $\alpha > 0$
\[
  \mathbb{P}(\abs{\sum_{k=1}^d \nu_k e_k} > \alpha d) \leq 2 \exp(-t \alpha d) \mathbb{E} \exp(t \sum_{k=1}^d \lambda_k e_k)
\]
Because the exponential random variables are independent,
\[
  \mathbb{E} \exp(t \sum_{k=1}^d \nu_k e_i) = \prod_{k=1}^d \mathbb{E} \exp(t \nu_k e_k) = \prod_{k=1}^d \int_0^\infty \exp(t \nu_k s - s) \, ds.
\]
Let us suppose that $2 t D \leq 1$. Then the integral converges and we have
\[
  \mathbb{E} \exp(t \sum_{k=1}^d \nu_k e_k) = \prod_{k=1}^d \frac{1}{1 - t \nu_k} = \exp(\sum_{k=1}^d \nu_k t + O((\nu_k t)^2)).
\]
Combining terms and using the fact that $\nu_1 + \cdots + \nu_d = 0$, we get
\[
  \mathbb{E} \exp(t \sum_{k=1}^d \nu_k e_k) = \exp(O(\sum_{k=1}^d (\nu_k t)^2)) = \exp(O(Mt^2)).
\]
We conclude that
\[
  \mathbb{P}(\sum_{k=1}^d \nu_k \abs{U_{k,i}}^2 > \alpha) \leq 2 \exp(-t\alpha d + O(M t^2))
\]
for all $2 t D \leq 1$. Set $t = c' \alpha d M^{-1}$ for $c' > 0$ such that the right hand side is bounded by $\exp(-c'' \alpha^2 d^2 / M)$
\end{proof}

\begin{proof}[Proof of Theorem~\ref{thm:main}]
Let $E_{k,\alpha}$ denote the event that $\abs{\langle A g_j^{(k)}, g_j^{(k)}\rangle - \int_{S^*M} \sigma_A \, d\omega} > \alpha$ for some $1 \leq j \leq \dim \mathcal{H}_k =: d$. By Proposition~\ref{prop:combinatorial} and Proposition~\ref{prop:recenter}, this is bounded by the event that $\abs{\sum_{k=1}^d \nu_k \abs{U_{k,j}}^2} > \alpha$ for some $1 \leq j \leq d$, where $\nu_1 + \cdots + \nu_d = 0$. By the union bound and Proposition~\ref{prop:expmomentmethod} we conclude that
\[
  \mathbb{P}(E_{k,\alpha}) \leq O(d \exp(-c'' \alpha^2 d^2 / M) + M \exp(-c d)).
\]
By the Sz\"ego asymptotics (or just our assumption on the upper bound of the second moment) we know that $M = \sum_{j=1}^d \nu_j \to M_\infty$ as $k \to \infty$. If we let $\alpha = C d^{-1} (\log d)$ for some $C > 0$ suitably large then the right hand side is bounded by $O(d^{-C})$. In particular,
\[
  \sum_{k=1}^\infty \mathbb{P}(E_{k,\alpha}) \leq \sum_{k=1}^\infty (\dim \mathcal{H}_k)^{-C} < \infty
\]
By the Borel-Cantelli lemma and the assumptions on $\dim \mathcal{H}_k$, with probability one at most a finite number of the events $E_{k,\alpha}$ can hold, and the result follows as $\alpha \to 0$.
\end{proof}

\section{Acknowledgements}
The author would like to thank Emmanuel Kowalski for directing him to the work leading to present article and Steve Zelditch for helpful comments.

\bibliographystyle{abbrv}
\bibliography{que.wave}

\end{document}